\theoremstyle{plain}
\newtheorem{theorem}{Theorem}
\newtheorem{lemma}[theorem]{Lemma}
\newtheorem{corollary}[theorem]{Corollary}
\newtheorem{question}[theorem]{Question}
\theoremstyle{remark}
\newcommand{\lem}[1]{Lemma~\ref{#1}}
\newcommand{\thm}[1]{Theorem~\ref{#1}}
\newcommand{\cor}[1]{Corollary~\ref{#1}}
\newcommand{\sect}[1]{Section~\ref{#1}}
\newcommand{\defn}{\emph}
\newcommand{\lc}{\leq}
\newcommand{\genBy}{\prec}
\newcommand{\A}{\mathcal{A}}
\newcommand{\B}{\mathcal{B}}
\newcommand{\C}{\mathcal{C}}
\newcommand{\D}{\mathcal{D}}
\newcommand{\G}{\mathcal{G}}
\newcommand{\sta}{\mathcal{S}}
\newcommand{\HM}{\mathcal{T}}
\newcommand{\gen}[3]{\mathcal{F}(#1, #2, #3)}
\newcommand{\genX}[4]{\mathcal{F}(#1, #2, #3)(#4)}
\newcommand{\forsome}{\text{ for some }}
\newcommand{\andd}{\text{ and }}
\title{Maximum hitting for $n$ sufficiently large}
\author{Ben Barber\footnote{Department of Pure Mathematics and Mathematical Statistics, Centre for Mathematical
Sciences, Wilberforce Road, Cambridge, CB3 0WB, UK.  {\tt b.a.barber@dpmms.cam.ac.uk}}}
\begin{document}

\maketitle

\begin{abstract}
For a left-compressed intersecting family $\A \subseteq [n]^{(r)}$ and a set $X \subseteq [n]$, let $\A(X) = \{A \in \A : A \cap X \neq \emptyset\}$.  Borg asked: for which $X$ is $|\A(X)|$ maximised by taking $\A$ to be all $r$-sets containing the element 1?  We determine exactly which $X$ have this property, for $n$ sufficiently large depending on $r$.
\end{abstract}

\section{Introduction}

Write $[n]=\{1,2,\ldots,n\}$ and $[m,n]=\{m, m+1,\ldots,n\}$.  Denote the set of $r$-sets from a set $S$ by $S^{(r)}$.  A \defn{family} of sets is a subset of $[n]^{(r)}$ for some $n$ and $r$.  We think of a set $A$ as an increasing sequence of elements $a_1a_2\ldots a_r$.  The \defn{compression order} on $[n]^{(r)}$ has $A \lc B$ if and only if $a_i \leq b_i$ for $1 \leq i \leq r$.  A family $\A$ is \defn{left-compressed} if $A \in \A$ whenever $A \leq B$ for some $B \in \A$.  The corresponding notion of left-compression is described in \sect{comp-gen}.

We call a family \defn{intersecting} if $A \cap B \neq \emptyset$ for all $A, B \in \A$.  (If $n<2r$ then every family is intersecting.)  The most basic result about intersecting families is the Erd\H{o}s-Ko-Rado Theorem.  For any $n$ and $r$, write $\sta = \{A \in [n]^{(r)}:1\in A\}$ for the \defn{star} at 1.

\begin{theorem}[Erd\H{o}s-Ko-Rado \cite{ErdosKoRado}]\label{EKR}
 If $n \geq 2r$ and $\A \subseteq [n]^{(r)}$ is intersecting, then $|\A| \leq |\sta|$.
\end{theorem}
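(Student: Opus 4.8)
The plan is to use compressions to reduce to the case of a left-compressed family, and then prove the bound for left-compressed intersecting families by induction. Recall that for $i<j$ the $ij$-compression $C_{ij}$ replaces each $A\in\A$ with $j\in A$ and $i\notin A$ by $(A\setminus\{j\})\cup\{i\}$, except that $A$ is left unchanged when $(A\setminus\{j\})\cup\{i\}$ already lies in $\A$; this preserves $|\A|$. A short case check shows $C_{ij}$ preserves being intersecting: the only case needing thought is when a compressed set $C=(A\setminus\{j\})\cup\{i\}$ meets an unchanged set $B\in\A$ only in the element $j$, but then $i\notin B$, so $(B\setminus\{j\})\cup\{i\}$ also lies in $\A$ and is disjoint from $A$, which does not contain $i$ — a contradiction. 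Since any $C_{ij}$ that actually changes $\A$ strictly decreases the nonnegative integer $\sum_{A\in\A}\sum_{a\in A}a$, repeatedly applying the $C_{ij}$ terminates at a left-compressed intersecting family of the same size, so we may assume $\A$ is left-compressed.

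For left-compressed intersecting $\A\subseteq[n]^{(r)}$ with $n\geq 2r$, I would induct on $r$, and within that on $n$. The cases $r=1$ and $n=2r$ are immediate; for $n=2r$, the involution $A\mapsto[n]\setminus A$ splits $[2r]^{(r)}$ into $\binom{2r-1}{r-1}$ pairs of disjoint sets, at most one from each of which can lie in $\A$. If $n>2r$, write $\A_0=\{A\in\A:n\notin A\}\subseteq[n-1]^{(r)}$ and $\A_1=\{A\setminus\{n\}:A\in\A,\ n\in A\}\subseteq[n-1]^{(r-1)}$, so $|\A|=|\A_0|+|\A_1|$. From the definitions $\A_0$ and $\A_1$ are again left-compressed and intersecting; the one point needing $n\geq 2r$ is that $\A_1$ is intersecting, since if $B_1,B_2\in\A_1$ were disjoint then $|B_1\cup B_2|=2r-2<n-1$, so some $m\in[n-1]$ avoids $B_1\cup B_2$, and replacing $n$ by $m$ in $B_2\cup\{n\}$ yields, by left-compression, a member of $\A$ disjoint from $B_1\cup\{n\}\in\A$. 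Now $n-1\geq 2r$ and $n-1\geq 2(r-1)$, so the inner induction gives $|\A_0|\leq\binom{n-2}{r-1}$ and the outer induction gives $|\A_1|\leq\binom{n-2}{r-2}$, whence $|\A|\leq\binom{n-2}{r-1}+\binom{n-2}{r-2}=\binom{n-1}{r-1}=|\sta|$.

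The main obstacle is the verification that $\A_1$ stays intersecting: this is precisely the step that collapses when $n<2r$, as there need be no spare element $m$ to compress onto, and indeed the statement itself fails in that range. Checking that $C_{ij}$ preserves being intersecting is routine but must be done with some care about which sets are compressed and which are kept. As an alternative route that sidesteps compressions altogether, one could instead run Katona's cyclic-permutation argument: place $[n]$ on a circle, observe that once $n\geq 2r$ at most $r$ of the $r$-sets forming arcs in a given cyclic order can pairwise intersect, and double-count pairs (cyclic order, member of $\A$ that is an arc in it).
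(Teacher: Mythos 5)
Your proof is correct, but note that the paper does not prove this theorem at all: it is the classical Erd\H{o}s--Ko--Rado theorem, stated with a citation to the original paper, and the only ingredient of your argument that appears in the text is \lem{compression} (that $C_{ij}$ preserves the intersecting property), which is itself delegated to Frankl's survey. What you have written is essentially the standard compression-plus-induction proof of EKR, and all the steps check out: the case analysis showing $C_{ij}(\A)$ is intersecting is handled correctly (the key subcase $A\cap B=\{j\}$, $i\notin B$ forces $(B\setminus\{j\})\cup\{i\}\in\A$ disjoint from $A$); the termination argument via the decreasing weight $\sum_{A\in\A}\sum_{a\in A}a$ is exactly the one the paper uses after \lem{compression}; the base case $n=2r$ via complementary pairs gives $\tfrac12\binom{2r}{r}=\binom{2r-1}{r-1}$; and the verification that $\A_1$ is intersecting is the one genuinely delicate point, which you identify and resolve correctly by compressing $n$ onto a spare element $m\notin B_1\cup B_2$, available precisely because $n>2r$. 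The inductive bookkeeping ($n-1\geq 2r$ for $\A_0$, $n-1\geq 2(r-1)$ for $\A_1$, and $\binom{n-2}{r-1}+\binom{n-2}{r-2}=\binom{n-1}{r-1}$) is also right. So this is a complete and correct proof of a statement the paper treats as a black box; it is more self-contained than the paper but not a different route from anything the paper actually does.
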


Borg considered a variant problem where we only count members that meet some fixed set $X$.  For a family $\A$ and a non-empty set $X$, write
\[
 \A(X) = \{A \in \A:A \cap X \neq \emptyset\}.
\]
\thm{EKR} tells us that we can maximise $|\A(X)|$ by taking $\A$ to consist of all $r$-sets containing some fixed element of $X$.  To avoid this trivial case we insist that $\A$ be left-compressed, which rules out stars centred anywhere but $1$.  The star at $1$ remains the optimal family if $1 \in X$, so we assume further that $X \subseteq [2,n]$.

\begin{question}
 For which $X$ do we have $|\A(X)| \leq |\sta(X)|$ for all left-compressed intersecting families $\A$?
\end{question}

Borg asked this question in \cite{Borg}, giving a complete answer for the case $|X| \geq r$ and a partial answer for the case $|X| < r$.  Call $X$ \defn{good} (for $n$ and $r$) if for every left-compressed intersecting family $\A\subseteq [n]^{(r)}$ we have $|\A(X)| \leq |\sta(X)|$.

\begin{theorem}[Borg \cite{Borg}]\label{borg}
Let $r \geq 2$, $n \geq 2r$ and $X \subseteq [2,n]$.
 \begin{enumerate}[topsep=4pt,itemsep=0pt,label={\normalfont (\alph*)}]
  \item If $|X| > r$, then $X$ is good.
  \item If $X$ is good and $X \leq X'$, then $X'$ is good.
  \item For any $k \leq r$, $\{2k, 2k+2, \ldots, 2r\}$ is good.
  \item If $n=2r$ and $|X|=r$, then $X$ is good if and only if $\{2,4,\ldots,2r\} \lc X$.
  \item If $n > 2r$, $|X| = r$ and either
  \begin{enumerate}[topsep=0pt,itemsep=0pt,label={\normalfont (\roman*)}]
   \item $r \geq 4$ and $X \neq [2,r+1]$,
   \item $r = 3$ and $\{2,3\} \not\subseteq X$, or
   \item $r = 2$ and $\{2,3\} \neq X$,
  \end{enumerate}
  then $X$ is good.  Otherwise, $X$ is not good.
 \end{enumerate}
\end{theorem}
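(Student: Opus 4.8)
I would build everything on the reformulation
\[
 |\A(X)|\lc|\sta(X)|
 \quad\Longleftrightarrow\quad
 \bigl(|\sta|-|\A|\bigr)+\bigl|\{A\in\A:A\cap X=\emptyset\}\bigr|\ \geq\ \binom{n-1-|X|}{r-1},
\]
which comes from $|\sta(X)|=|\sta|-\binom{n-1-|X|}{r-1}$ (using $1\notin X$) and $|\A(X)|=|\A|-|\{A\in\A:A\cap X=\emptyset\}|$. By \thm{EKR} the first bracket is nonnegative, so $X$ is good precisely when no left-compressed intersecting family close to extremal has too few members disjoint from $X$. Statement~(b) drops out after reducing to a single up-move $X'=(X\setminus\{j\})\cup\{j+1\}$ with $j\in X$, $j+1\notin X$: writing $Y=X\setminus\{j\}$, replacing $j+1$ by $j$ gives $|\{A\in\A:A\cap Y=\emptyset,\ j+1\in A\}|\lc|\{A\in\A:A\cap Y=\emptyset,\ j\in A\}|$ (the images stay in $\A$ since $\A$ is left-compressed), whence $|\A(X')|\lc|\A(X)|\lc|\sta(X)|=|\sta(X')|$. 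Thus goodness is upward closed, and for each fixed $|X|$ it is enough to pin down the minimal good sets.

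The positive conclusions rest on a structural lemma: if $\A\subseteq[n]^{(r)}$ is left-compressed and intersecting, then every $A\in\A$ has $a_i\lc 2i-1$ for some $i$. Otherwise $A$ dominates $M:=\{2,4,\ldots,2r\}$ in the compression order, so $M\in\A$; but then $\{1,3,\ldots,2r-1\}\lc M$ forces $\{1,3,\ldots,2r-1\}\in\A$ too, and these sets are disjoint. (Equivalently $\A\subseteq\bigcup_{i=1}^r\{A:|A\cap[2i-1]|\geq i\}$, and any $A\in\A$ with $1\notin A$ contains the consecutive pair $\{2i-2,2i-1\}$ in positions $i-1,i$, where $i\geq 2$ is least with $a_i\lc 2i-1$.) When $|X|\geq n-r$ the right-hand binomial is $0$ or $1$, and \thm{EKR} with its extremal case already yields~(a). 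Statement~(d) is also short: for $n=2r$ and $|X|=r$ we have $\binom{n-1-|X|}{r-1}=1$, so $X$ is good iff no left-compressed intersecting family of size $|\sta|$ contains $X$; as such a family is downward closed, $X\in\A$ forces $\genFrom{X}\subseteq\A$, and applying $|B\cap[t]|+|\overline{B}\cap[t]|=t$ to a disjoint pair $B,\overline{B}\in\genFrom{X}$ shows $\genFrom{X}$ is intersecting exactly when $M\not\lc X$; when $M\not\lc X$ one completes $\genFrom{X}$ to a left-compressed intersecting family of size $|\sta|$ using the pairing of complementary $r$-sets. This is precisely the dichotomy in~(d).

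For the other positive cases --- the middle range of~(a), all of~(c), and the ``good'' half of~(e) --- I would combine the structural lemma with the Hilton-Milner stability bound (a non-star left-compressed intersecting family has at most $|\sta|-\binom{n-r-1}{r-1}+1$ members), organised by an induction on $r$ whose base cases ($r$ small) are verified directly. The lemma lets one split $\A$ by the least index $i$ with $a_i\lc 2i-1$, handle the members outside the star at $1$ through their forced coordinates, and reduce to a smaller ground set; the Hilton-Milner bound supplies the slack the reformulation requires exactly when $\A$ has no member disjoint from $X$. The ``not good'' halves of~(d) and~(e) are then exhibited by concrete families. For $X=[2,r+1]$ with $n>2r$, the Hilton-Milner family $\mathcal{H}:=\sta([2,r+1])\cup\{[2,r+1]\}$ is left-compressed and intersecting with every member meeting $X$, so $|\mathcal{H}(X)|=|\mathcal{H}|=|\sta(X)|+1$. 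For $r\in\{2,3\}$ with $\{2,3\}\subseteq X$ --- by~(b) the only case to test --- the family $\{A:\{2,3\}\subseteq A\}\cup\sta(\{2,3\})$ is left-compressed and intersecting with every member meeting $X$, and $|\A(X)|-|\sta(X)|=\binom{n-2}{r-2}-\sum_{j=n-1-r}^{n-4}\binom{j}{r-2}$, which is positive precisely when $r\lc 3$. And for $n=2r$ with $M\not\lc X$, the completion of $\genFrom{X}$ described above does the job.

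The step I expect to be the real obstacle is the ``good'' half of~(e) for $n>2r$: that every size-$r$ set apart from the listed exceptions is good. There the families competing with $\sta$ are genuinely non-stars, so \thm{EKR} alone is useless and one must marry the structural lemma to the sharper Hilton-Milner-type estimate to control $|\A(X)|$; and because the thresholds move with $r$, closing the induction --- and confirming that the exceptional behaviour really is limited to $X=[2,r+1]$ for $r\geq 4$ and to $\{2,3\}\subseteq X$ for $r=3$, uniformly over every $n>2r$ rather than just large $n$ --- calls for a delicate, case-by-case analysis.
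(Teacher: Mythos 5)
A preliminary remark: \thm{borg} is quoted from Borg's paper and is nowhere proved in this paper, so there is no internal proof to compare your plan against; I can only assess it on its own terms. Much of what you write is sound. The reformulation via $|\sta(X)|=|\sta|-\binom{n-1-|X|}{r-1}$ is correct; the proof of (b) by a single-step compression injection works; the structural lemma (every member of a left-compressed intersecting family has $a_i\lc 2i-1$ for some $i$) is correct as you prove it; the analysis of (d) via complementary pairs at $n=2r$ is right, including the equivalence of ``$\genFrom{X}$ intersecting'' with $\{2,4,\ldots,2r\}\not\lc X$ (though the completion of $\genFrom{X}$ to an extremal left-compressed family deserves an explicit construction, e.g.\ adding the member of each remaining complementary pair that contains $1$). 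The counterexample families for the negative halves are the right ones --- indeed $\{A:|A\cap[3]|\geq 2\}$ is exactly the family $\gen{r}{n}{\{23\}}$ that the paper itself uses in Section 4 --- apart from an arithmetic slip: for $r=3$, $|X|=3$, the excess $|\A(X)|-|\sta(X)|$ is $1$, not the $2$ your displayed formula yields; the sign, which is all you need, is correct.

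The genuine gap is that the substantive positive content of the theorem --- part (c), the ``good'' half of (e), and part (a) in the range $r<|X|<n-r$ --- is not proved but only assigned to a toolkit (``structural lemma plus Hilton--Milner stability plus induction on $r$'') with no inductive statement, no base case, and no indication of why the tools suffice. Concretely: for (c) with $k=r$ the claim is that every left-compressed intersecting family has at most $\binom{n-2}{r-2}$ members containing the element $2r$; here the required slack in your reformulation is $\binom{n-2}{r-1}$, which is of order $|\sta|$, whereas the Hilton--Milner deficit for a non-star is only of order $\binom{n-r-1}{r-1}$, so almost all of the work must come from counting members of $\A$ disjoint from $X$ --- an estimate of a completely different character that the plan never addresses. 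Even for the ``good'' half of (e), where the numerology is friendlier (required slack $\binom{n-r-1}{r-1}$, matching the Hilton--Milner bound up to an additive $1$), plain Hilton--Milner leaves you one short for every non-star, and closing that gap requires either a stability statement (non-star, non-$\HM$ families are strictly smaller) or a direct argument that every such family has a member disjoint from $X$; neither is stated or justified, and for $r=3$ the second extremal family $\gen{r}{n}{\{23\}}$ shows the stability route needs care. Since these are precisely the parts where Borg's proof does its real work, the proposal as it stands establishes only the easy fringes of the theorem.
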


It is not true that all $X$ are good.  For example, consider the \defn{Hilton-Milner} family $\HM = \sta([2,r+1]) \cup \{[2,r+1]\}$ .  The family $\HM$ is left-compressed and for any $X \subseteq [2,r+1]$, $|\HM(X)| = |\sta(X)| + 1$, so $X$ is not good.

Our main result is that, surprisingly, for large $n$ and $|X| \geq 4$ this turns out to be the only obstruction.

\begin{theorem}\label{main}
 Let $r \geq 3$, $n \geq 2r$ and $X \subseteq [2,n]$ with $|X| \leq r$.  If $X \not\subseteq [2,r+1]$ and either
 \begin{enumerate}[topsep=4pt,itemsep=0pt,label={\normalfont (\roman*)}]
  \item $|X| \geq 4$,
  \item $|X| = 3$ and $\{2,3\}\not\subseteq X$,
  \item $|X| = 2$ and $2, 3 \not\in X$, or
  \item $|X| = 1$,
 \end{enumerate}
 then, for $n$ sufficiently large, $X$ is good.  Otherwise, $X$ is not good.
\end{theorem}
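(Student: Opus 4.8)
The plan is to compare $\A$ with the star $\sta$ by splitting each family according to whether a set contains the element~$1$. Write $\A_1 = \{A \in \A : 1 \in A\}$ and $\A_0 = \{A \in \A : 1 \notin A\}$, so that (since $\A_1 \subseteq \sta$, and a member of $\sta$ meets $X$ just when it does) the inequality $|\A(X)| \le |\sta(X)|$ is equivalent to
\[
 |\A_0(X)| \;\le\; |\sta(X)| - |\A_1(X)| \;=\; |\sta(X) \setminus \A_1|.
\]
This is trivial when $\A_0 = \emptyset$, so from now on I assume $\A_0 \neq \emptyset$. The ``otherwise, not good'' half of the theorem is quick: when $X \subseteq [2,r+1]$ the Hilton--Milner family $\HM$ already does it, and for the remaining exceptional sets $X$ (size~$2$ meeting $\{2,3\}$, or size~$3$ containing $\{2,3\}$) the left-compressed intersecting family $\{A \in [n]^{(r)} : |A \cap [3]| \ge 2\}$ satisfies $|\A(X)| = |\sta(X)| + \binom{n-4}{r-3} > |\sta(X)|$.

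For the ``good'' direction I would first reduce the sets $X$ to be considered. By \thm{borg}(b) goodness is upward closed in the compression order, so it suffices to treat the $\lc$-minimal sets permitted by the hypotheses of \thm{main}. The case $|X| = r$ is covered by \thm{borg}(d)--(e); and for $1 \le s \le r-1$, the sets of size $s$ permitted by (i)--(iv) are precisely those lying above
\[
 Y_1 = \{r+2\},\quad Y_2 = \{4,r+2\},\quad Y_3 = \{2,4,r+2\},\quad Y_s = \{2,3,\ldots,s,r+2\}\ \ (4 \le s \le r-1),
\]
so everything reduces to showing each $Y_s$ is good for $n$ large. Each $Y_s$ is a short initial segment together with the ``high'' element $r+2$ (which is what forces $Y_s \not\subseteq [2,r+1]$), the length of the segment being exactly what (i)--(iv) dictate.

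The key tool is a compression observation: for every $k$, the family $\B^{(k)} = \{A \in \A : A \cap [k] = \emptyset\}$ is $(k+1)$-intersecting. Indeed, if $A \in \B^{(k)}$ and $T \subseteq A$ with $|T| = k$, then $(A \setminus T) \cup [k] \lc A$, hence $(A \setminus T) \cup [k] \in \A$; any $A' \in \B^{(k)}$ meets this set but avoids $[k]$, so $(A \cap A') \setminus T \neq \emptyset$ for all such $T$, giving $|A \cap A'| \ge k+1$. In particular $\A_0 = \B^{(1)}$ is $2$-intersecting, and it is left-compressed as a family in $[2,n]^{(r)}$. Feeding this into the Ahlswede--Khachatrian theorem (the case $t = 2$) --- or using compression directly --- one obtains, for $n$ large, a dichotomy: either $|\A_0| = O_r(n^{r-3})$, or $\A_0 \subseteq \{A \in [2,n]^{(r)} : \{2,3\} \subseteq A\}$.

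To finish, observe that, $\A$ being intersecting, every set $C$ with $1 \in C$ disjoint from some member of $\A_0$ lies in $\sta \setminus \A_1$; so $|\sta(X) \setminus \A_1|$ is at least the number $h_X$ of such $C$ that also meet $X$, and it is enough to prove $|\A_0(X)| \le h_X$. Since $\A_0$ is left-compressed it contains $B_0 = \{2,\ldots,r+1\}$, and $Y_s \not\subseteq B_0$, so just the sets $C$ disjoint from $B_0$ already give $h_{Y_s} = \Omega_r(n^{r-2})$. When $|\A_0| = O_r(n^{r-3})$ this beats $|\A_0(Y_s)| \le |\A_0|$, and likewise when $\A_0 \subseteq \{A : \{2,3\} \subseteq A\}$ but $Y_s$ avoids $\{2,3\}$ (so $Y_s \in \{Y_1, Y_2\}$): then meeting $Y_s$ is a genuine extra restriction on a set already containing $\{2,3\}$, so $|\A_0(Y_s)| = O_r(n^{r-3})$ too. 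The delicate case --- where I expect the real work to be --- is $\A_0 \subseteq \{A : \{2,3\} \subseteq A\}$ with $Y_s$ meeting $\{2,3\}$ (so $Y_s = Y_3$, or $s \ge 4$, whence every member of $\A_0$ meets $Y_s$ and $|\A_0(Y_s)|$ may be as large as $\binom{n-3}{r-2}$). Here a single $B_0$ no longer suffices: one must either pass to the union over all $B \in \A_0$ of $\{C : 1 \in C,\ C \cap B = \emptyset,\ C \cap Y_s \neq \emptyset\}$, or argue via the cross-intersection of $\A_0$ with the traces on $[2,n]$ of the relevant sets containing $1$. Either way, for $\A_0$ not too far from the full $2$-star the comparison $|\A_0(Y_s)| \le h_{Y_s}$ reduces to a binomial inequality of the shape $\binom{n-3}{r-2} \le \binom{n-3}{r-1} - \binom{n-3-|Y_s \setminus \{2,3\}|}{r-1}$, which holds for large $n$ exactly because conditions (i)--(iv) guarantee $|Y_s \setminus \{2,3\}| \ge 2$; the main obstacles are making the structural dichotomy for $\A_0$ precise (including the transition between ``$\A_0$ essentially a $2$-star'' and ``$\A_0$ of lower order''), and keeping the required lower bound on $n$ a function of $r$ alone.
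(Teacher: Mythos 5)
Your negative direction is sound: $\{A : |A\cap[3]|\ge 2\}$ is precisely the family generated by $\{23\}$ that the paper uses, your excess $\binom{n-4}{r-3}$ is correct in all three exceptional cases, the reduction via \thm{borg}(b) to the $\lc$-minimal sets $Y_s$ is valid, and the compression argument showing $\A_0$ is $2$-intersecting is correct. But the positive direction has a genuine gap, and it sits exactly where you say you ``expect the real work to be''. First, the dichotomy ``either $|\A_0|=O_r(n^{r-3})$ or $\A_0\subseteq\{A:\{2,3\}\subseteq A\}$'' does not follow from the Ahlswede--Khachatrian complete intersection theorem, which bounds the maximum size of a $t$-intersecting family rather than describing non-trivial ones; you would need the non-trivial $t$-intersecting theorem (Frankl, or Ahlswede--Khachatrian), or a direct argument combining your $\B^{(k)}$ trick with the $t$-intersecting Erd\H{o}s--Ko--Rado bound --- doable, but not written. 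Second, and decisively, in the delicate case ($\A_0$ inside the $2$-star at $\{2,3\}$ and $2\in Y_s$, so $|\A_0(Y_s)|=|\A_0|$) the target inequality $|\A_0|\le|\sta(Y_s)\setminus\A_1|$ involves a trade-off your sketch does not control: as $\A_0$ shrinks from the full $2$-star the left side decreases, but $\A_1$ may simultaneously grow (more sets containing $1$ become compatible with $\A_0$), so the right side decreases too. Your displayed binomial inequality is the correct comparison only in the extremal case $\A=\genFrom{23}$; for intermediate $\A_0$ both sides are $\Theta_r(n^{r-2})$ with a priori comparable constants, the single-set bound via $B_0=[2,r+1]$ gives only $\binom{n-r-2}{r-2}<\binom{n-3}{r-2}$, and no argument is supplied for the ``union over all $B\in\A_0$'' or ``cross-intersection'' alternatives you mention.

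The paper sidesteps this regime entirely, and the contrast explains why your route stalls. By \lem{description}, every maximal left-compressed intersecting family in $[n]^{(r)}$ is generated by one of finitely many (depending only on $r$) generating sets $\G\subseteq[2r]$; by \lem{polynomial}, $|\genX{r}{n}{\G}{X}|$ is a polynomial in $n$ whose leading coefficient is $|\genX{2}{2r}{\G}{X}|$, so one only compares leading coefficients of finitely many polynomials against $|X|$ --- there is no continuum of intermediate families to worry about, and ``$n$ sufficiently large'' is automatically uniform in $\G$. To rescue your argument you would need to first reduce to maximal families and then prove that a maximal left-compressed intersecting family either equals $\genFrom{23}$ or has $|\A_0|=O_r(n^{r-3})$; without some such finiteness or rigidity statement, the intermediate case remains open and the proof is incomplete.
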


For $r=2$, condition (iii) needs to be replaced by $X \neq \{2, 3\}$.  The result can then be checked easily by hand or read out of \thm{borg} in conjunction with the Hilton-Milner example, so we assume $r\geq 3$ for simplicity.

Our proof uses Ahlswede and Khachatrian's notion of generating sets to express  the sizes of maximal left-compressed intersecting families, and their restrictions under $X$, as polynomials in $n$.  It turns out to be sufficient to consider only leading terms, reducing a question about intersecting families of $r$-sets to a question about intersecting families of 2-sets, which have a very simple structure.

\sect{comp-gen} sets out the basic properties of compressions and generating sets that we shall use.  \sect{mlcif} describes a way of thinking about maximal left-compressed intersecting families and proves the lemma that allows us to compare coefficients of polynomials instead of set sizes.  \sect{main-theorem} completes the proof of \thm{main}.  \sect{generalisations} discusses possible improvements and generalisations.


\section{Compressions and generating sets}\label{comp-gen}

In this section we describe the notion of left-compression corresponding to $\lc$ on $[n]^{(r)}$ and the use of generating sets.

\subsection{Compressions}

For a set $A$, and $i<j$, the \defn{$ij$-compression} of $A$ is
\[
 C_{ij}(A) = \begin{cases}
            A - j + i & \text{if }j \in A, i \not\in A,
\\          A         & \text{otherwise;}
           \end{cases}
\]
that is, replace $j$ by $i$ if possible.  Observe that $A\lc B$ if and only if $A$ can be obtained from $B$ by a sequence of $ij$-compressions.

For a set family $\A$, define
\[
 C_{ij}(\A) = \{C_{ij}(A) : A \in \A \andd C_{ij}(A) \not\in \A\} \cup \{A : A \in \A \andd C_{ij}(A) \in \A\};
\]
that is, compress $A$ if possible.  Observe that $\A$ is left-compressed if and only if $C_{ij}(\A)=\A$ for all $i<j$.  We will use the following basic result.
\begin{lemma}\label{compression}
 If $\A$ is intersecting then $C_{ij}(\A)$ is intersecting.
\end{lemma}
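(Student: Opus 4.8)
The plan is to argue directly from the definition, taking two sets $A', B' \in C_{ij}(\A)$ and showing $A' \cap B' \neq \emptyset$ by tracing back to their preimages in $\A$. Each element of $C_{ij}(\A)$ is either a set already in $\A$ that was left fixed (because its $ij$-compression was also present), or the image $C_{ij}(A)$ of some $A \in \A$ with $C_{ij}(A) \notin \A$. So given $A', B' \in C_{ij}(\A)$, pick preimages $A, B \in \A$ with $A' = C_{ij}(A)$ or $A' = A$, and similarly for $B'$; in all cases $A'$ differs from $A$ only possibly by having $i$ where $A$ had $j$, and likewise for $B', B$. Since $A \cap B \neq \emptyset$, there is some common element $x$.

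The key case analysis is on what happens to this witness $x$. If $x \neq j$, then $x$ survives the compression in both sets: compression only ever removes $j$, so $x \in A'$ and $x \in B'$ (replacing $j$ by $i$ does not delete any element other than $j$), and we are done. So suppose $x = j$, i.e.\ $j \in A \cap B$. If $j \in A'$ then $j \in A' \cap B'$ provided $j \in B'$ as well; the only way $j$ leaves $B$ under compression is if $i \notin B$ and we replace $j$ by $i$, so then $i \in B'$. Symmetrically handle $A$. The genuinely interesting subcase is when $j$ is removed from \emph{both} $A$ and $B$, so that $i \in A'$ and $i \in B'$, giving $i \in A' \cap B'$. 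The remaining subcase is when $j$ is removed from exactly one of them, say $A' = A - j + i$ with $i \notin A$, while $B' = B$ still contains $j$; here I need another common element. This is where the hypothesis $C_{ij}(A) \notin \A$ (which is what let us replace $A$ by $C_{ij}(A)$ in the first place) must be used: if $j$ were the \emph{only} common element of $A$ and $B$, I want to derive a contradiction, typically by showing $C_{ij}(A)$ would have to intersect everything in $\A$ — in particular one checks that $C_{ij}(A)$ meets $B$ fails, so instead one shows $C_{ij}(B) = B$ forces $i \in B$, and then...

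Let me restate the crux cleanly. Suppose $A' \cap B' = \emptyset$ for contradiction, with $A \cap B \neq \emptyset$. The only elements that can be lost passing from $A$ to $A'$ or $B$ to $B'$ are $j$ (lost) in exchange for $i$ (gained). So $A \cap B \subseteq \{j\}$, forcing $A \cap B = \{j\}$, and moreover at least one of $A', B'$ has had $j$ replaced by $i$ — say $A' = C_{ij}(A) = A - j + i$, so $i \notin A$ and $A' \notin \A$. Now $i \notin B'$ (else, since $i \in A'$, we'd have $i \in A' \cap B'$), so $i \notin B$ too, and $B' = C_{ij}(B)$ — but $j \in B$ and $i \notin B$ means $C_{ij}(B) = B - j + i \ni i$, contradicting $i \notin B'$ unless $B' = B$, i.e.\ $C_{ij}(B) \in \A$, i.e.\ $B - j + i \in \A$. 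Set $\tilde B = B - j + i \in \A$. Then $A' \cap \tilde B$: both contain $i$, contradiction with $A' \cap B' = \emptyset$? No — $B' = B \neq \tilde B$. Hmm, so instead: $A \in \A$ and $\tilde B \in \A$ must intersect; $A \cap \tilde B = (A \cap B - j + i)$ restricted appropriately $= (\{j\} \setminus \{j\}) \cup (\{i\} \cap A) = \emptyset$ since $i \notin A$. That is the contradiction: $A$ and $\tilde B$ are two members of the intersecting family $\A$ that do not meet.

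The main obstacle is bookkeeping this final subcase correctly — making sure the two sets one derives to be disjoint genuinely both lie in $\A$, rather than in $C_{ij}(\A)$. Everything else is immediate from the observation that $ij$-compression changes a set by at most swapping $j$ for $i$, so any common element other than $j$ is automatically preserved.
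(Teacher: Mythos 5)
Your argument is correct: the only nontrivial case is $A\cap B=\{j\}$ with $j$ replaced by $i$ in exactly one of the two sets, and your final contradiction (that $A$ and $B-j+i$ would be two disjoint members of $\A$, using that $B$ survives uncompressed only because $C_{ij}(B)\in\A$) is exactly the standard case check. The paper does not write this out — it simply cites Frankl's survey for the "easy case check" — and your proof is precisely that check, though the exploratory false start before your "restate the crux cleanly" paragraph should be deleted in a final write-up.
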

\begin{proof}
 The proof is an easy case check.  Details, and a further introduction to compressions, can be found in Frankl's survey article \cite{Frankl}.
\end{proof}
\lem{compression} means that we can always compress an intersecting family to a left-compressed intersecting family of the same size by repeatedly applying $ij$-compressions.  We eventually reach a left-compressed family as $\sum_{A \in \A} \sum_{i=1}^r a_i$ is positive and strictly decreases with each successful compression.

\subsection{Generating sets}

For any $r$ and $n$, and a collection $\G$ of sets, the family \defn{generated} by $\G$ is
\[
 \gen{r}{n}{\G} = \{A \in [n]^{(r)} : A \supseteq G \forsome G \in \G\}.
\]
Generating sets were introduced by Ahlswede and Khachatrian \cite{AhlswedeKhachatrian}, and are useful for the study of intersecting families because they give a restricted number of sets on which all the intersecting actually happens.

\begin{lemma}[\cite{AhlswedeKhachatrian}]\label{old-generators}
 For $n \geq 2r$, $\gen{r}{n}{\G}$ is intersecting if and only if $\G$ is.
\end{lemma}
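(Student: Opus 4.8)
The plan is to prove the two implications separately, observing that the hypothesis $n \geq 2r$ is needed for only one of them.

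For the ``if'' direction I would argue directly: supposing $\G$ is intersecting, take any $A, B \in \gen{r}{n}{\G}$, choose $G_1, G_2 \in \G$ with $G_1 \subseteq A$ and $G_2 \subseteq B$, and note that $\emptyset \neq G_1 \cap G_2 \subseteq A \cap B$. This step needs nothing about $n$, nor about the sizes of the generators.

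For the ``only if'' direction I would argue by contraposition, manufacturing two disjoint members of $\gen{r}{n}{\G}$ out of a failure of $\G$ to be intersecting. So pick $G_1, G_2 \in \G$ with $G_1 \cap G_2 = \emptyset$; as is implicit in the notion of a generating set, we may assume $|G_1|, |G_2| \leq r$ (a generator larger than $r$ is contained in no $r$-set, hence contributes nothing to $\gen{r}{n}{\G}$ — and without this convention the statement is literally false, e.g. $\gen{2}{4}{\{\{1,2,3\},\{4\}\}}$ is a star). Then extend greedily: since $|[n] \setminus G_2| = n - |G_2| \geq n - r \geq r$, there is an $r$-set $A$ with $G_1 \subseteq A \subseteq [n] \setminus G_2$, and since $|[n] \setminus A| = n - r \geq r$ there is an $r$-set $B$ with $G_2 \subseteq B \subseteq [n] \setminus A$. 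Now $A, B \in \gen{r}{n}{\G}$ are disjoint, so $\gen{r}{n}{\G}$ is not intersecting.

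I do not expect a genuine obstacle here: the only thing to watch is the size bookkeeping in the greedy step, which is exactly where $n \geq 2r$ enters — we need room for two disjoint $r$-sets at all, and room to extend each generator inside the complement of the other. It is worth recording that the boundary case $n = 2r$ causes no trouble (there $[n]^{(r)}$ itself is not intersecting, consistent with $\emptyset \in \G$ forcing $\gen{r}{n}{\G} = [n]^{(r)}$), whereas for $n < 2r$ every family is intersecting and the ``only if'' direction genuinely fails.
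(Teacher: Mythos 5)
Your proof is correct and follows essentially the same route as the paper: the forward direction is immediate from containment of generators, and the converse completes two disjoint generators to disjoint $r$-sets using $n \geq 2r$. The only difference is that you spell out the size bookkeeping and flag the implicit convention that generators have size at most $r$ (which the paper enforces later, e.g.\ in Corollary~\ref{new-generators}); both are reasonable additions but not a different argument.
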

\begin{proof}
 If $\G$ is intersecting then certainly $\gen{r}{n}{\G}$ is.  Conversely, if $\G$ contains two disjoint sets then (since $n \geq 2r$) they can be completed to disjoint $r$-sets in $\gen{r}{n}{\G}$.
\end{proof}

If $\G$ generates a left-compressed intersecting family then 
\[
 \G' = \{G' : G' \leq G \forsome G \in \G\}
\]
generates the same family, so we may assume that $\G$ is `left-compressed' (overlooking non-uniformity) and can therefore be described by listing its maximal elements.  It is convenient to take
\[
 \gen{r}{n}{\G} = \{A \in [n]^{(r)} : A \genBy G \forsome G \in \G\},
\]
where $A \genBy G$ (`$A$ is generated by $G$') if and only if $|G| \leq |A|$ and $a_i \leq g_i$ for $1 \leq i \leq |G|$.  We can think of $\genBy$ as an extension of $\lc$ to the non-uniform case, where `missing' elements are assumed to take the value $\infty$.  Thus
\begin{align*}
 123 & \genBy 12\; (\;= 12\infty); \\
 (12\infty =\;)\; 12 & \not\genBy 123.
\end{align*}
The following weaker form of \lem{old-generators} is better suited to our new definition and is sufficient for our purposes.

\begin{corollary}\label{new-generators}
Let $n \geq 2r$ and $\G$ be a collection of subsets of $[2s]$ of size at most $s$.  If $\gen{s}{2s}{\G}$ is intersecting, then so is $\gen{r}{n}{\G}$. \qed
\end{corollary}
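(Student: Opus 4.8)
The plan is to prove the contrapositive: assuming $\gen{r}{n}{\G}$ is not intersecting, I would produce two disjoint members of $\gen{s}{2s}{\G}$. So suppose $A, B \in [n]^{(r)}$ are disjoint with $A \genBy G$ and $B \genBy H$ for some $G, H \in \G$. The guiding idea is that all of the relevant structure already lives on the initial segments $G^* = \{a_1, \ldots, a_{|G|}\}$ of $A$ and $H^* = \{b_1, \ldots, b_{|H|}\}$ of $B$: since $\G$ consists of sets of size at most $s$ we have $|G|, |H| \le s$; since $A \genBy G$ we have $a_i \le g_i \le 2s$ for $1 \le i \le |G|$ (and symmetrically for $H^*$), so $G^*, H^* \subseteq [2s]$; and since $A \cap B = \emptyset$, the sets $G^*$ and $H^*$ are disjoint, together occupying $|G| + |H| \le 2s$ elements of $[2s]$.

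Next I would pad $G^*$ and $H^*$ back up to size $s$ using the $2s - |G| - |H|$ elements of $[2s]$ they leave unused, adding $s - |G|$ of them to the first set and the remaining $s - |H|$ to the second. This is possible precisely because $(s - |G|) + (s - |H|)$ equals the number of free elements and both summands are non-negative, so the two enlarged sets $A', B' \subseteq [2s]$ remain disjoint. Moreover padding preserves $\genBy$: inserting elements smaller than some $a_i$ only decreases the sorted coordinates, so $a'_i \le a_i \le g_i$ still holds for $1 \le i \le |G|$, and likewise $B' \genBy H$. Then $A'$ and $B'$ are disjoint members of $\gen{s}{2s}{\G}$, contradicting the hypothesis.

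I do not expect a real obstacle — the argument is elementary bookkeeping — but the one point to check with care is exactly that last step, that the padding cannot spoil the relations $A' \genBy G$, $B' \genBy H$ even when the new elements are small. An alternative and even more routine route is to reduce directly to \lem{old-generators}: replacing $\G$ by its $\genBy$-downset $\tilde\G = \{G' : |G'| = |G| \andd g'_i \le g_i \text{ for all } i, \text{ for some } G \in \G\}$ rewrites the $\genBy$-definition of $\gen{r}{n}{\G}$ as the $\supseteq$-definition of $\gen{r}{n}{\tilde\G}$, and $\tilde\G$ still consists of subsets of $[2s]$ of size at most $s$; then \lem{old-generators} with parameters $(s, 2s)$ shows $\tilde\G$ is intersecting, and with parameters $(r, n)$ finishes the job. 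The hypothesis $n \ge 2r$ is needed only to invoke \lem{old-generators}; for $n < 2r$ the statement is vacuous since every family is then intersecting.
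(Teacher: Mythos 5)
Your proposal is correct, and it matches the paper's (entirely implicit) argument: the paper states this as a corollary of \lem{old-generators} with no written proof, and your second route --- passing to the $\genBy$-downset $\tilde\G$ so that the $\genBy$-definition becomes the $\supseteq$-definition, then applying \lem{old-generators} twice --- is exactly that reduction, while your first, self-contained padding argument verifies the same fact directly (and in fact never needs $n \geq 2r$). The one step you flagged for care, that padding $G^*$ up to an $s$-set preserves $A' \genBy G$, is fine: enlarging a set can only decrease its $i$-th smallest element for $i \leq |G|$.
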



\section{Maximal left-compressed intersecting families}\label{mlcif}

We say an intersecting family $\A \subseteq [n]^{(r)}$ is \defn{maximal} if no other set can be added to $\A$ while preserving the intersecting property.  The maximal objects in the set of left-compressed intersecting families are maximal intersecting families (otherwise an extension could be compressed to a left-compressed extension), so the ordering of `maximal' and `left-compressed' is unimportant.

The maximal left-compressed intersecting subfamilies of $[n]^{(2)}$ are $\{12, 13, \ldots, 1n\}$ and $\{12, 13, 23\}$, and we can already distinguish between these families when $n=4$.  In fact, the same phenomenon occurs for all $r$.

\begin{lemma}\label{description}
 Let $\A \subseteq [2r]^{(r)}$ be a maximal left-compressed intersecting family and $n \geq 2r$.  Then $\A$ extends uniquely to a maximal left-compressed intersecting subfamily of $[n]^{(r)}$.  Moreover, every maximal left-compressed intersecting subfamily of $[n]^{(r)}$ arises in this way.
\end{lemma}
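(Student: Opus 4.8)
The plan is to show that restriction to $[2r]$ gives a bijection $\Phi$ from the maximal left-compressed intersecting subfamilies of $[n]^{(r)}$ onto those of $[2r]^{(r)}$, where $\Phi(\A) = \A \cap [2r]^{(r)}$. Once we know that $\Phi(\A)$ is always maximal left-compressed intersecting, that $\Phi$ is injective, and that $\Phi$ is surjective, the lemma follows: given a maximal left-compressed intersecting $\A_0 \subseteq [2r]^{(r)}$, any maximal left-compressed intersecting $\A \subseteq [n]^{(r)}$ with $\A \supseteq \A_0$ automatically has $\Phi(\A) = \A_0$ (as $\Phi(\A)$ is intersecting and $\A_0$ is maximal in $[2r]^{(r)}$), so such an $\A$ exists and is unique by surjectivity and injectivity of $\Phi$, and every maximal left-compressed intersecting subfamily of $[n]^{(r)}$ is of this form, being the unique extension of its own restriction. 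Throughout I use the remark from the start of this section that a maximal left-compressed intersecting family is the same thing as a left-compressed maximal intersecting family, so that any $r$-set lying outside such a family is disjoint from some member of it.

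The engine of the proof is the following observation: \emph{if $\A \subseteq [n]^{(r)}$ is maximal left-compressed intersecting and $A \in [n]^{(r)} \setminus \A$, then $\A$ contains a set $B_0 \subseteq [2r]$ with $A \cap B_0 = \emptyset$, and one may take $B_0 = [2r] \setminus A$ when $A \subseteq [2r]$.} The point is that $A$ can meet at most $r$ of the $2r$ points of $[2r]$, so $[2r] \setminus A$ has at least $r$ elements, and hence the $r$ smallest elements of $[n] \setminus A$ all lie in $[2r]$; let $B_0$ be this $r$-set, so $B_0 \subseteq [2r] \setminus A$. By maximality some $B \in \A$ is disjoint from $A$, and since $B_0$ is the $\lc$-least $r$-subset of $[n] \setminus A$ we have $B_0 \lc B$, so $B_0 \in \A$ because $\A$ is left-compressed. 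If $A \subseteq [2r]$ then $[2r] \setminus A$ has exactly $r$ elements and so equals $B_0$.

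Granting the observation, the remaining steps are routine. First, $\Phi(\A)$ is intersecting and left-compressed — if $A \lc B$ with $B \in \Phi(\A)$ then $\max A \le \max B \le 2r$, so $A \in [2r]^{(r)}$, and $A \in \A$ — and it is maximal, because for $A \in [2r]^{(r)} \setminus \Phi(\A)$ the observation supplies $[2r] \setminus A \in \Phi(\A)$ disjoint from $A$. Second, $\Phi$ is injective: if $A$ lies in $\A_1$ but not in $\A_2$, the observation applied to $\A_2$ gives $B_0 \in \A_2 \cap [2r]^{(r)}$ disjoint from $A$, and if $\Phi(\A_1) = \Phi(\A_2)$ then $B_0 \in \A_1$ as well, contradicting that $\A_1$ is intersecting. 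Third, $\Phi$ is surjective: given maximal left-compressed intersecting $\B \subseteq [2r]^{(r)}$, take $\G = \B$; since all the sets in question have size $r$, $A \genBy G$ is equivalent to $A \lc G$, so $\gen{r}{2r}{\G} = \B$ (here using that $\B$ is left-compressed), which is intersecting, and so $\gen{r}{n}{\G}$ is intersecting by \cor{new-generators}. Moreover $\gen{r}{n}{\G}$ is left-compressed (if $A' \lc A \genBy G$ then $A' \genBy G$), hence by the opening discussion of this section it is contained in some maximal left-compressed intersecting $\A \subseteq [n]^{(r)}$; then $\Phi(\A) \supseteq \gen{r}{2r}{\G} = \B$, and since $\Phi(\A)$ is intersecting and $\B$ is maximal in $[2r]^{(r)}$, in fact $\Phi(\A) = \B$.

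I expect the displayed observation to be essentially the whole content: the key realisation is that $A$ can block at most half of $[2r]$, so a set disjoint from $A$ can always be found inside $[2r]$ and then dragged down into $\A$ using left-compression, and this single fact drives both the maximality of $\Phi(\A)$ and the injectivity of $\Phi$. The one further point needing a word of care is the claim used in the surjectivity step that a left-compressed intersecting family is contained in a maximal one that is still left-compressed; this is the observation recorded just after \lem{compression} — take a largest left-compressed intersecting family containing it, which must be maximal since otherwise one could add a set and re-compress (re-compression fixing a left-compressed subfamily) to obtain a strictly larger one.
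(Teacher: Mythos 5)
Your proof is correct, and it takes a genuinely different route from the paper's. You show that restriction $\A \mapsto \A \cap [2r]^{(r)}$ is a bijection between the maximal left-compressed intersecting subfamilies of $[n]^{(r)}$ and those of $[2r]^{(r)}$, and the engine is your observation that for $A \notin \A$ the $r$ smallest elements of $[n]\setminus A$ form a set $B_0 \subseteq [2r]$ that lies $\lc$-below every $r$-subset of $[n]\setminus A$, hence below the disjoint witness supplied by maximality, hence lies in $\A$ itself; this one fact drives both the maximality of the restriction and the injectivity, while surjectivity comes from feeding $\G = \B$ into \cor{new-generators} and extending. The paper instead builds an explicit generating set $\G$ by truncating the $\lc$-maximal elements of $\A$ at their final segments of $[2r]$, exhibits the unique extension concretely as $\gen{r}{n}{\G}$, and proves both uniqueness and surjectivity via the coordinatewise compression $b_i' = \min(b_i, r+i)$, which plays the same role as your $B_0$. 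Your argument is arguably more elementary and avoids the truncation bookkeeping; what the paper's version buys is the explicit representation of every maximal left-compressed intersecting family as $\gen{r}{n}{\G}$ for a collection $\G$ of subsets of $[2r]$ of size at most $r$ --- precisely the form that \lem{polynomial}, the enumeration of generating sets for $r=3$, and the proof of \thm{main} all rely on. If one adopted your proof of the lemma as stated, a small additional argument would still be needed later to recover that generating-set description.
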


\begin{proof}
 Since $\A$ is left-compressed, it can be completely described by listing its $\lc$-maximal elements $A_1, \ldots, A_k$.  Some of these sets might contain final segments of [2r].  The idea is that the elements of these final segments would take larger values if they were allowed to, so we obtain a generating set by `replacing them by $\infty$'.

For $A = A_i$, take $s$ greatest with $a_s < r+s$ ($s$ exists since $[r+1, 2r]$ is not a member of any left-compressed intersecting family), and let $A' = a_1 \ldots a_s$.  Then $\G = \{A_1', \ldots, A_k'\}$ generates $\A$, as the sets generated by $A_i'$ are precisely those lying below $A_i$.  Since $\G$ is a collection of subsets of $[2r]$ of size at most $r$ and $\A = \gen r {2r} \G$ is intersecting, \cor{new-generators} tells us that $\gen r n \G$ is a left-compressed intersecting family for every $n$.

Now let $\B$ be any extension of $\A$ to a left-compressed intersecting subfamily of $[n]^{(r)}$.  We will show that $\B \subseteq \gen{r}{n}{\G}$.  Indeed, if $\B \not\subseteq \gen r n \G$ then there is a $B \in \B \setminus \gen r n \G$.  We claim that there is a $B' \in [2r]^{(r)}$ with $B' \lc B$ and $B' \not\in \gen r {2r} \G$, contradicting the maximality of $\A$.

We obtain $B'$ from $B$ by compressing as little as possible to get $B' \subseteq [2r]$; that is, we take $B' = (B \cap [2r]) \cup [q, 2r]$ with $q$ chosen such that $|B'| = r$.  Explicitly, $b_i' = \min(b_i, r+i)$.  Now take $G\in\G$.  Since $B \not\in \gen r n \G$, there is an $i$ with $b_i > g_i$.  By construction, $r+i > g_i$.  So $b_i' = \min(b_i, r+i) > g_i$, and $G$ does not generate $B'$.  Hence $\A$ extends uniquely to a maximal left-compressed intersecting subfamily of $[n]^{(r)}$.

It remains to show that every maximal left-compressed intersecting subfamily of $[n]^{(r)}$ arises in this way.  So suppose $\C \subseteq [n]^{(r)}$ is a maximal left-compressed intersecting family with $\C \cap [2r]^{(r)}$ not maximal.  Let $\D_0$ be an extension of $\C \cap [2r]^{(r)}$ to a maximal left-compressed intersecting subfamily of $[2r]^{(r)}$, and let $\D$ be the unique maximal extension of $\D_0$ to $[n]^{(r)}$.  Since $\C$ is maximal and $\D \setminus \C \neq \emptyset$, there is a $C \in \C \setminus \D$.  As above, we obtain $C'\in[2r]^{(r)}$ with $C' \lc C$ and $C' \not\in \D_0$.  But then $C' \not \in \C$, contradicting the assumption that $\C$ is left-compressed.
\end{proof}

\lem{description} allows a compact description of maximal left-compressed intersecting families.  For example, $\{1\}$ generates the star and $\{1(r+1), [2,r+1]\}$ generates the Hilton-Milner family.  Enumerating the generating sets using a computer is feasible for small $r$; for $r=3$ they are $\{1\},\{23\},\{345\},\{14,234\},\{13,235,145\}$ and $\{12,245\}$.

In view of \lem{description}, our key tool is the following.

\begin{lemma}\label{polynomial}
 Let $n \geq 2$, $X \subseteq [2,2r]$.  Then
\[
 |\genX r n \G X| = \sum_{i=1}^r |\genX i {2r} \G X| \binom{n-2r}{r-i}.
\]
\end{lemma}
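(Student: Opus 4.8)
The plan is to partition $\genX r n \G X$ according to how much of each member lies in $[2r]$. As in \lem{description}, $\G$ is a collection of subsets of $[2r]$, and I shall run the argument for $n \geq 2r$ --- the range we actually need, and the one in which $\binom{n-2r}{r-i}$ counts the $(r-i)$-subsets of $[2r+1,n]$.

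The key step will be to show that, for $A \in [n]^{(r)}$ with $B := A \cap [2r]$, one has $A \in \gen r n \G$ if and only if $B \in \gen{|B|}{2r}{\G}$. First I would note that every element of $A$ outside $[2r]$ exceeds $2r$, hence exceeds every element of $A$ in $[2r]$, so that $B$ is an initial segment $a_1 \ldots a_{|B|}$ of $A$. Then: if $A \genBy G$ with $G \in \G$, then $a_j \leq g_j$ for $j \leq |G|$, and since $G \subseteq [2r]$ we have $g_j \leq 2r$, so $a_1, \ldots, a_{|G|}$ all lie in $B$; thus $|G| \leq |B|$ and $b_j = a_j \leq g_j$ for $j \leq |G|$, i.e.\ $B \genBy G$. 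Conversely, if $B \genBy G$ then $|G| \leq |B| \leq r = |A|$ and $a_j = b_j \leq g_j$ for $j \leq |G|$, so $A \genBy G$. I would also record that, since $X \subseteq [2,2r]$ and $A \setminus B \subseteq [2r+1,n]$, we have $A \cap X = B \cap X$, so $A$ meets $X$ precisely when $B$ does.

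Putting these together, $A \mapsto (A \cap [2r],\, A \setminus [2r])$ is a bijection from $\genX r n \G X$ onto the set of pairs $(B, C)$ with $B \in \genX i {2r} \G X$ for some $1 \leq i \leq r$ and $C \in [2r+1,n]^{(r-i)}$ (the value $i = 0$ contributes nothing, since then $B = \emptyset$ misses $X$). For each $i$ and each such $B$ there are exactly $\binom{n-2r}{r-i}$ choices of $C$, so summing over $i$ yields
\[
 |\genX r n \G X| \;=\; \sum_{i=1}^{r} |\genX i {2r} \G X| \binom{n-2r}{r-i},
\]
as claimed.

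I expect the main obstacle to be the biconditional of the second paragraph --- in particular the one point where the assumption $G \subseteq [2r]$ for $G \in \G$ enters: checking that $a_j \leq g_j$ together with $g_j \leq 2r$ really does force $a_j$ into $[2r]$, hence into $B$, so that the two generation relations agree. Everything after that is routine unwinding of the definitions of $\genBy$ and of $\A(X)$.
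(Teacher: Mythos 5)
Your proof is correct and takes essentially the same approach as the paper's: decompose each member of $\genX r n \G X$ into its intersection with $[2r]$ (an initial segment witnessing both the generation by some $G \in \G$ and the non-empty intersection with $X$) and its remainder in $[2r+1,n]$. The paper states this in a single informal paragraph; the biconditional $A \genBy G \Leftrightarrow A \cap [2r] \genBy G$ that you verify explicitly is exactly the detail it leaves implicit.
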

\begin{proof}
 How do we construct a member of $\genX r n \G X$?  We first choose an initial segment for our set that is contained in $[2r]$ and witnesses the membership of $\genX r n \G X$ (i.e. meets $X$ and is $\genBy$ some $G\in\G$).  We then complete our set by taking as many elements as we need from outside $[2r]$.  This gives rise to the size claimed.
\end{proof}


\section{Proof of \thm{main}}\label{main-theorem}

We first show that $X$ is not good if the given conditions do not hold.  We have already seen that for $X \subseteq [2,r+1]$ the Hilton-Milner family shows that $X$ is not good for any $n$.  In each of the remaining cases we claim that the family generated by $\{23\}$ shows that $X$ is not good for any $n$.

So take $X = 23k$ with $k \geq r+2$.  We have
\[
 |\genX r n {\{1\}} {23k}| = \binom {n-2} {r-2} + \binom {n-3}{r-2} + \binom{n-4}{r-2},
\]
where the first term counts the sets containing $1$ and $2$, the second term the sets containing $1$ and $3$ but not $2$, and the third term the sets containing $1$ and $k$ but neither $2$ nor $3$.  Similarly,
\[
 |\genX r n {\{23\}} {23k}| = \binom {n-2} {r-2} + \binom {n-3}{r-2} + \binom{n-3}{r-2},
\]
where the terms count the sets containing $1$ and $2$, the sets containing $1$ and $3$ but not $2$, and the sets containing $2$ and $3$ but not $1$ respectively.  Since $r \geq 3$, $|\genX r n {\{23\}} {23k}| > |\genX r n {\{1\}} {23k}|$ and $23k$ is not good.

Next take $X = 3j$ with $j \geq r+2$.  We have
\[
 |\genX r n {\{1\}} {3j}| = \binom {n-2}{r-2} + \binom{n-3}{r-2},
\]
where the terms count the sets containing $1$ and $3$, and the sets containing $1$ and $j$ but not $3$ respectively.  Similarly,
\[
 |\genX r n {\{23\}} {3j}| = \binom {n-2}{r-2} + \binom{n-3}{r-2} + \binom{n-4}{r-3},
\]
where the terms count the sets containing $1$ and $3$, the sets containing $2$ and $3$ but not $1$, and the sets containing $1$, $2$ and $j$ but not $3$ respectively.  Again, since $r \geq 3$, $|\genX r n {\{23\}} {3j}| > |\genX r n {\{1\}} {3j}|$ and $3j$ is not good.  It follows from \thm{borg}(b) that $2j$ is not good either.

Now we take $X$ satisfying the conditions of the theorem and show that $X$ is good for $n$ sufficiently large.  We will show that, for any $\G \neq \{1\}$, $|\genX  2 {2r} \G X| < |\genX 2 {2r} {\{1\}} X| = |X|$.  Note that, for any $\G$, $|\genX 1 {2r} \G X| = 0$ as the only possible singleton generator is 1, which does not meet $X$.  So by \lem{polynomial}, $\genX 2 n \G X$ has size polynomial in $n$ with leading coefficient $|\genX  2 {2r} \G X|$, from which the result will follow.

There are two maximal left-compressed intersecting families of 2-sets, and $\genX 2 {2r} \G X$ must be contained in one of them.  We handle each case separately.

Suppose first that $\genX 2 {2r} \G X \subseteq \{12, 13, 23\}$. Then it is enough to show that
\[
 |\{12,13,23\}(X)| < |X|.
\]
This is clearly true for $|X| \geq 4$.  If $|X| = 3$, then it is true because one of 2 or 3 is missing from $X$ so that $|\{12,13,23\}(X)| \leq 2$.  If $|X| = 2$, then it is true because both $2$ and $3$ are missing from $X$, so that $|\{12,13,23\}(X)| = 0$.  Finally, if $|X| = 1$, then it is true because $X = \{i\}$ with $i \geq r+2 \geq 4$.

Next suppose that $\genX 2 {2r} \G X \subseteq \{12, 13, \ldots, 1(2r)\}$.  Since $\gen r {2r} \G$ is left-compressed and has a member not containing the element 1, it has $[2,r+1]$ as a member.  Hence by the intersecting property of the generators, $\genX 2 {2r} \G X$ cannot contain $1j$ for any $j \geq r+2$.  But $X \not\subseteq [2,r+1]$, so there is such a $j \in X \setminus [2, r+1]$ and $|\genX 2 {2r} \G X| < |X|$.\qed

\section{Improvements and generalisations}\label{generalisations}

What happens for small $n$?  \thm{borg}(c) tells us that our characterisation cannot be correct for all $n \geq 2r$.
\begin{question}
 How large is `sufficiently large' for $n$ in \thm{main}?
\end{question}
For $2 \leq r \leq 5$, computational results suggest that $n \geq 2r+2$ is large enough for our characterisation to be correct.  It would be particularly nice to show that $n \geq 2r+c$ is sufficient for some constant $c$ independent of $r$.

A natural conjecture is that for $n=2r$, $[2k, 2k+2, \ldots, 2r]$ is the unique minimal good set of its size.  However, this is false; computational results give that $\{7,10\}$ and $\{5,8,10\}$ are unique minimal good sets of their size when $r=5$.
\begin{question}
 Is there a `nice' characterisation of the good sets for $n=2r$ when $r$ is sufficiently large? 
\end{question}
It seems unlikely that a good explicit description exists for intermediate values of $r$ and $n$.  The following may be easier.
\begin{question}
 Is there a short list of families, one of which maximises $|\A(X)|$ for any $X$?
\end{question}

Versions of \lem{description} hold for any property that is preserved under left-compression and can be detected on generating sets.  The most obvious candidate is that of being $t$-intersecting (a family $\A$ is \defn{$t$-intersecting} if $|A \cap B| \geq t$ for all $A, B \in \A$).  Indeed, an identical argument gives the corresponding result that, for large $n$, a set $X \subseteq [t+1, n]$ with $|X| \geq t+3$ is good if and only if $X \not\subseteq [t+1, r+1]$.  (For smaller $X$ the form of good $X$ is again decided by the need to prevent problems caused when $\genX {t+1} {2r-t+1} \G X \subseteq [t+2]^{(t+1)}$.)

In the context of $t$-intersecting families it may be more natural to consider
\[
 \A(s, X) = \{A \in \A:|A \cap X| \geq s\}.
\]
For $s=1$ the argument relies on the fact that maximal left-compressed $t$-intersecting families of $(t+1)$-sets have one of two very simple forms.  For $s=2$, even the $t=1$ case is complicated by the larger number of structures of intersecting families of 3-sets (more generally, $(t+s)$-sets); this problem seems likely to get worse for larger $s$ and $t$.

\vspace{2em}
{\noindent\footnotesize {\bf Acknowledgements.}
I would like to thank the anonymous referees for carefully reading an earlier draft of this paper and making a number of helpful comments.
}

\bibliography{maximum-hitting}{}
\bibliographystyle{plain}

\end{document}